\title{The product of lattice covolume and discrete series formal dimension: $\fp$-adic $GL(2)$ }
\author{Lauren C. Ruth}
\address{Department of Mathematics, Vanderbilt University, 1326 Stevenson Center, Nashville, TN 37240, USA}
\email{lauren.c.ruth@vanderbilt.edu}
\newtheorem{thm}{Theorem}[section]
\newtheorem{prop}[thm]{Proposition}
\newtheorem{cor}[thm]{Corollary}
\newtheorem{lem}[thm]{Lemma}
\theoremstyle{definition}
\newtheorem{defn/lem}[thm]{Definition/Lemma}
\newtheorem{rem}[thm]{Remark}
\newtheorem{examp}[thm]{Example}
\newtheorem{problem}{Problem}[]
\newtheorem{que}[problem]{Question}
\newcommand{\C}{{\mathbb C}}
\newcommand{\F}{{\mathbb F}}
\renewcommand{\H}{{\mathbb H}}
\newcommand{\R}{{\mathbb R}}
\newcommand{\W}{{\mathbb W}}
\newcommand{\Z}{{\mathbb Z}}
\newcommand{\HH}{{\mathcal H}}
\newcommand{\JJ}{{\mathcal J}}
\newcommand{\MM}{{\mathcal M}}
\newcommand{\UU}{{\mathcal U}}
\newcommand{\fP}{\mathfrak{P}}
\newcommand{\fo}{\mathfrak{o}}
\newcommand{\fp}{\mathfrak{p}}
\newcommand{\id}{\operatorname{id}}
\begin{document}
\begin{abstract}
Let $F$ be a nonarchimedean local field of characteristic $0$ and residue field of order not divisible by $2$.  We show how to calculate the product of the covolume of a torsion-free lattice in $PGL(2,F)$ and the formal dimension of a discrete series representation of $GL(2,F)$.  The covolume comes from a theorem of Ihara, and the formal dimensions are contained in results of Corwin, Moy, and Sally.  By a theorem going back to Atiyah, and by triviality of the second cohomology group of a free group, the resulting product is the von Neumann dimension of a discrete series representation considered as a representation of a free group factor.  
\end{abstract}

\maketitle


\section{Introduction} 

The product of the covolume of a lattice and the formal dimension of a discrete series representation is significant in many areas of mathematics: Under various conditions, this product equals the multiplicity of a discrete series representations in square-integrable functions on a symmetric space, the dimension of a space of cusp forms on the upper-half plane, or the size of the commutant of a von Neumann algebra.  This note concerns the latter.

In Section \ref{background}, we provide background on von Nemann algebras; then we give examples of a theorem of Atiyah in the setting of $PSL(2,\R)$, observing that the list of examples can be extended if the lattice has trivial second cohomology group; then we explain how a proof of Atiyah's theorem carries over to the setting of $PGL(2,F)$, where $F$ is a local nonarchimedean field.  

In Section \ref{comp}, we deal carefully with Haar measure, obtaining the covolume of a lattice from a theorem of Ihara, and, for this same normalization of Haar measure, the formal dimensions of discrete series representations, which were calculated for $GL(n,F)$ in a paper of Corwin, Moy, and Sally. 

Our result is Example \ref{padicfree}:  

\begin{examp} \label{padicfree}
Let $F$ be a nonarchimedean local field of characteristic $0$ and residue field of order $q$ not divisible by $2$.  Let $F_n$ be a free group contained as a lattice in $PGL(2,F)$.
Let $(\pi,\HH)$ be a discrete series representation of $GL(2,F)$.  Then we have the following list of von Neumann dimensions for each $\HH$ as a representation of the II$_1$ factor $RF_n$: 
\begin{align*}
        \text{dim}_{R\Gamma} \HH &= \begin{cases} 
     n-1                      & \text{$\pi$ Steinberg representation (or twist)} \\ 
      (n-1)2q^k, \,\,\,\, k=0,1,2, \ldots  & \text{$\pi$ unramified supercuspidal representation}  \\
      (n-1)(q+1)q^{k}, \,\,\,\, k=0,1,2,\ldots   & \text{$\pi$ ramified supercuspidal representation}  \\
      \end{cases}
\end{align*}
\end{examp}

This extends the list of von Neumann dimensions in the setting of $PGL(2,F)$ in \cite{ruthdiss}, where only the Steinberg representation and just one supercuspidal representation were considered.

Compare this list of von Neumann dimensions for representations of free group factors $RF_n$ to the list in Example \ref{free} in the setting of $PSL(2,\R)$.  The difference between the two lists is due to the difference between the parametrizations of the discrete series representations of $SL(2,\R)$ and $GL(2,F)$.

\section{Background} \label{background}

\subsection{Von Neumann dimension}

A \textit{von Neumann algebra} is a self-adjoint unital subalgebra of bounded linear operators on complex Hilbert space that is closed in the weak operator topology.  A \textit{factor} is a von Neumann algebra whose center consists only of scalars.  A \textit{finite factor} is a factor possessing a (unique) faithful weakly continuous trace.  These are the factors of Type I$_n$, which are isomorphic to $M_n(\C)$, and of Type II$_1$, of which we will now give an example.  A discrete subgroup $\Gamma$ of a locally compact unimodular group $G$ is called a \textit{lattice} if $G / \Gamma$ supports a finite Haar measure; and by Borel's density theorem, any lattice in a centerless semisimple Lie group without compact simple factors has this property (\cite{ghj} Lemma 3.3.1).  Let $\Gamma$ be such a lattice, and denote by $R\Gamma$ the closure with respect to the weak operator topology of the right regular representation of $\Gamma$ on $l^2(\Gamma)$.  Then $R\Gamma$ is a II$_1$ factor (\cite{sak} Lemma 4.2.18).

While the normalized trace on a Type I$_n$ factor assumes values in $\lbrace 0, \frac{1}{n}, \frac{2}{n}, \ldots , 1 \rbrace$ on projections, 
the normalized trace on a Type II$_1$ factor assumes all values in $\left[ 0, 1 \right]$ on projections.  This leads to the continuous \textit{von Neumann dimension} of modules over II$_1$ factors.  When Murray and von Neumann defined this dimension in \cite{roo1}, they called it the ``coupling constant,'' because it measures the size of a factor's commutant on a given representation space.  Representations of a II$_1$ factor are classified up to unitary equivalence by their von Neumann dimension: Given a representation, one can obtain a representation of any von Neumann dimension in 
$\left( 0, \infty \right)$ by applying an amplification and a projection.  Atiyah used this notion of von Neumann dimension to define $L^2$-Betti numbers in \cite{ati}, and Theorem \ref{ghjvndim} below is an outgrowth of his work on index theory.  

\subsection{Formal dimensions of discrete series representations}

We call an irreducible unitary representation $(\pi,\HH)$ of a connected semisimple Lie group $G$ (\textit{e.g}.\ $SL(2,\R)$) a \textit{discrete series representation} if one (hence all) of its matrix coefficients are square-integrable. Such a representation is equivalent to a subrepresentation of the right regular representation of $G$ on $L^2(G)$ (\cite{rob}, 16.2 Theorem).  Discrete series representations behave like irreducible unitary representations of compact groups, in the sense that they have a \textit{formal dimension}, a constant d$_\pi > 0$ such that Schur's relations hold:  
\begin{align*}
    \int_G ( \pi(g) u, v ) \overline{( \pi(g) u', v' )} dg = \frac{1}{\text{d}_\pi}( u, u') \overline{(v, v')} \qquad (u,v,u',v' \in \HH)
\end{align*}

Consider $GL(2,F)$, where $F$ is any local field.  Similar to the definition above, we call an irreducible unitary representation $(\pi,\HH)$ of $GL(2,F)$ a \textit{discrete series representation} if one (hence all) of its matrix coefficients $( \pi(g) u,v) )$, $u,v \in \HH$, are square-integrable \textit{modulo the center $Z$ of $G$}.  In this setting, the formal dimension d$_\pi$ is again defined by Schur's relations:
\begin{align*}
    \int_{G/Z} ( \pi(g) u, v ) \overline{( \pi(g) u', v' )} d\dot{g} = \frac{1}{\text{d}_\pi}( u, u') \overline{(v, v')}  \qquad (u,v,u',v' \in \HH)
\end{align*}

\subsection{Motivating theorem and examples}

The following theorem has its roots in Atiyah's work on $L^2$-index in \cite{ati} and in Atiyah and Schmid's geometric realizations of discrete series representations of semisimple Lie groups in \cite{atishm}.

\begin{thm} \label{ghjvndim}
(\cite{ghj} Theorem 3.3.2) 
Let $G$ be a connected semisimple real Lie group having discrete series representations, let $\Gamma$ be a lattice in $G$, and let $\pi : G \rightarrow U(\HH)$ be a discrete series representation. Assume that every non-trivial conjugacy class of $\Gamma$ has infinitely many elements. Then the representation $\pi$ restricted to $\Gamma$ extends to a representation of the \emph{II}$_1$ factor $R\Gamma$, with von Neumann dimension given by 
\begin{align*}
    \emph{dim}_{R\Gamma} \HH = \emph{covol}(\Gamma) \cdot \emph{d}_\pi,
\end{align*}
where $\emph{d}_\pi$ denotes the formal dimension of $\pi$.
\end{thm}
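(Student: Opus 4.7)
The plan is to realize $\HH$ as a $\Gamma$-invariant subspace of $L^2(G)$ carrying the left-translation action of $\Gamma$, identify $R\Gamma$ with part of the commutant of that action, and then compute the von Neumann dimension via a trace-of-projection calculation over a fundamental domain. The ICC hypothesis (every non-trivial conjugacy class of $\Gamma$ is infinite) ensures, by the standard Murray--von Neumann criterion, that $R\Gamma$ is a $\text{II}_1$ factor, so $\dim_{R\Gamma}$ is defined.

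First, I would fix a unit vector $v_0 \in \HH$ and consider the matrix-coefficient map $T : \HH \to L^2(G)$ defined by $T(u)(g) = \sqrt{\text{d}_\pi}\,\langle u, \pi(g) v_0 \rangle$. Schur's orthogonality relations, which apply precisely because $\pi$ is a discrete series representation, imply that $T$ is a $G$-equivariant isometry with respect to left translation on $L^2(G)$. Thus $T(\HH)$ is a $\Gamma$-invariant closed subspace of $L^2(G)$, and $\pi|_\Gamma$ on $\HH$ is unitarily equivalent to the restriction of left translation to $T(\HH)$.

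Next, I would pick a Borel fundamental domain $D \subset G$ for the left $\Gamma$-action, so that $\text{vol}(D) = \text{covol}(\Gamma)$. The unitary $U : L^2(G) \to \ell^2(\Gamma) \otimes L^2(D)$ given by $(Uf)(\gamma, g) = f(\gamma g)$ intertwines left translation by $\Gamma$ on $L^2(G)$ with $\lambda_\Gamma \otimes \id_{L^2(D)}$, where $\lambda_\Gamma$ is the left regular representation of $\Gamma$ on $\ell^2(\Gamma)$. The commutant of $\lambda_\Gamma$ is $R\Gamma$; hence $R\Gamma \ovt \C\cdot\id_{L^2(D)}$ lies in the commutant of the $\Gamma$-action on $L^2(G)$, so it preserves $T(\HH)$, and transporting back along $T$ gives the promised extension of $\pi|_\Gamma$ to a normal representation of $R\Gamma$ on $\HH$.

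Finally, $\dim_{R\Gamma} \HH$ equals the value of the natural semifinite trace $\tau_{R\Gamma} \otimes \Tr_{L^2(D)}$ on the commutant, evaluated at $U P_{T(\HH)} U^*$. A short calculation shows that $P_{T(\HH)}$ is the integral operator on $L^2(G)$ with reproducing kernel $K(h,g) = \text{d}_\pi \langle \pi(g) v_0, \pi(h) v_0 \rangle$; its diagonal is constantly $K(g,g) = \text{d}_\pi \|v_0\|^2 = \text{d}_\pi$, and integrating over $D$ gives $\text{d}_\pi \cdot \text{covol}(\Gamma)$. The main obstacle is justifying this last step: one must check that the canonical trace on $R\Gamma$, paired with the operator-trace on $L^2(D)$ and transported along $U$, actually computes von Neumann dimension of subspaces via the diagonal-integral formula. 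This normalization matching, together with the cross-check that the construction is independent of the choice of $v_0$ and of $D$, is the technical heart of the argument and the place where one has to be most careful.
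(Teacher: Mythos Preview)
The paper does not prove Theorem~\ref{ghjvndim}; it quotes the statement from \cite{ghj} and later only refers to ``the main ingredients of the proof in \cite{ghj}.'' Your sketch is precisely the Goodman--de la Harpe--Jones argument: embed $\HH$ into $L^2(G)$ via a normalized matrix coefficient, trivialize the left $\Gamma$-action through the fundamental-domain unitary $L^2(G)\cong\ell^2(\Gamma)\otimes L^2(D)$, and read off the coupling constant as the $(\tau\otimes\Tr)$-value of the projection, computed by integrating the constant diagonal $K(g,g)=\text{d}_\pi$ over $D$. So in substance you are reproducing the cited proof, not offering an alternative route.

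There is one genuine slip in your middle paragraph. From ``$R\Gamma\otimes\C\cdot\id$ lies in the commutant of the $\Gamma$-action'' you conclude ``so it preserves $T(\HH)$.'' That implication is false as stated: $T(\HH)$ being $\Gamma$-invariant only tells you that $P_{T(\HH)}$ lies in the commutant $(\lambda_\Gamma\otimes 1)'=R\Gamma\,\ovt\,\BB(L^2(D))$, and two elements of a commutant need not commute with each other, so $R\Gamma\otimes 1$ has no reason to fix $T(\HH)$. What \emph{does} preserve $T(\HH)$ is the \emph{bicommutant} $(\lambda_\Gamma\otimes 1)''=L\Gamma\otimes 1$, and this is exactly the extension of $\pi|_\Gamma$ you want (with $L\Gamma\cong R\Gamma$ as abstract II$_1$ factors, which is all the theorem asserts). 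Once you make that correction, your trace computation in the final paragraph is the right one: $P_{T(\HH)}\in R\Gamma\,\ovt\,\BB(L^2(D))$ and $\dim_{L\Gamma}T(\HH)=(\tau_{R\Gamma}\otimes\Tr)(P_{T(\HH)})=\text{d}_\pi\cdot\text{vol}(D)$, with the normalization check you flag being exactly the point handled carefully in \cite{ghj}.
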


Let us give examples.  Let $G=PSL(2,\R)$, and let $\Gamma$ be a Fuchsian group of the first kind.  By a theorem of Fricke and Klein (Proposition 2.4 in \cite{iwan}), $\Gamma$ is isomorphic to a group having  generators
\begin{align} \label{Fgens}
    A_1, \ldots , A_g, B_1 , \ldots B_g , E_1 , \ldots , E_l, P_1,...P_h
\end{align}
satisfying the relations 
\begin{gather} \label{Frelns}
\left[ A_1,B_1 \right] \ldots \left[ A_g,B_g \right] E_1 \ldots E_l P_1 \ldots P_h = 1, \\
E_j ^{m_j} = 1, \,\,\,\,\, 1 \leq j \leq l. \nonumber
\end{gather}

The area of a fundamental domain for the action of $\Gamma$ by linear fractional transformations on the upper half-plane $\H$ with respect to the measure $y^{-2}dxdy$ is given by the Gauss--Bonnet formula (pg. 33 of \cite{iwan}),
\begin{align} \label{gb}
\frac{\text{vol}( \Gamma \backslash \H )}{2 \pi} = 2g-2+\sum_{j=1}^l \left( 1-\frac{1}{m_j} \right) + h.
\end{align}

Let $N.A.K$ be the Iwasawa decomposition of $SL(2,\R)$, and normalize Haar measure $dn.da.dk$ so that $\int_K dk = 1$.  Then $\text{vol}(\Gamma \backslash \H) = \text{vol}(\Gamma \backslash G)$.

With respect to this same Haar measure, the list of formal dimensions of discrete series representations of $SL(2,\R)$ is 
\begin{align} \label{rformdim}
 \text{d}_k = \frac{k-1}{4\pi} \qquad (k=2,3,4,\ldots)
\end{align}
The calculation of formal dimensions is part of 17.8 Theorem in \cite{rob}, and the Haar measure normalization is discussed at the top of pg.\ 148 of \cite{ghj}.  

From the construction in Section 17 of \cite{rob}, one sees that the discrete series representations of $SL(2,\R)$ factoring through $PSL(2,\R)$ are those indexed by even $k$.  (For odd $k$, $-I$ acts as multiplication by $-1$.)  So we have the following example.  

\begin{examp}  \label{keven}
Let $G=PSL(2,\R)$, let $\Gamma$ be a Fuchsian group of the first kind in $G$, and let $\pi : G \rightarrow \UU(\HH)$ be a discrete series representation of $G$, by which we mean a discrete series representation of $SL(2,\R)$ that factors through $PSL(2,\R)$ --- so, $k$ must be even.  Then the conditions of Theorem \ref{ghjvndim} are satisfied, and we have a representation of $R\Gamma$ on $\HH$ of von Neumann dimension
\begin{align}
        \text{dim}_{R\Gamma} \HH &= 2\pi \left( 2g-2+\sum_{j=1}^l \left( 1-\frac{1}{m_j} \right) + h \right) \cdot \frac{k-1}{4\pi} \qquad
        (k=2,4,6,\ldots)
\end{align}
where we have used formulas (\ref{gb}) and (\ref{rformdim}), with $g, m_1, \ldots , m_l, h$ as in (\ref{Fgens}) and (\ref{Frelns}).
\end{examp}

\begin{rem} \label{projrep}
Suppose moreover that $\Gamma$ has trivial second cohomology group.  Then there is no need to restrict $k$ to be even: If $\pi$ is a projective representation of $PSL(2,\R)$ coming from a representation of $SL(2,\R)$, then the 2-cocycle associated to the restriction of the projective representation $\pi$ to $\Gamma$ is trivial, so we have a true representation of $\Gamma$ on $\HH$, and this representation extends to a representation of the II$_1$ factor $R\Gamma$.  
\end{rem}

\begin{examp} \label{ghjex} 
(\cite{ghj} Example 3.3.4)
Suppose $\Gamma$ is the free product of cyclic groups $\Z_2 * \Z_q$, where $q \geq 3$, which has  $g=0$, $m_1=2$, $m_2=q$, $h=1$ in \ref{Fgens} and \ref{Frelns}.  Then by Remark \ref{projrep}, we may start with representations of $SL(2,\R)$, even though $\Gamma$ is in $PSL(2,\R)$, and we have 
\begin{align*}
        \text{dim}_{R\Gamma} \HH &= \left( 1 - \frac{2}{q} \right) \cdot \frac{k-1}{4} \qquad
        (k=2,3,4,\ldots).
\end{align*}
\end{examp}

\begin{examp} \label{free}
Suppose in Example \ref{keven}, we take $\Gamma$ to be a free group $F_n$, which has $g=0$, no $m_i$, $h=n+1$ in \ref{Fgens} and \ref{Frelns}.  Then by Remark \ref{projrep}, we may start with representations of $SL(2,\R)$, even though $\Gamma$ is in $PSL(2,\R)$, and we have 
\begin{align*}
       \text{dim}_{RF_n} \HH &= (n-1) \cdot \frac{k-1}{2} \qquad
        (k=2,3,4,\dots).
\end{align*}
\end{examp}

\subsection{Corollary to a proof of Theorem \ref{ghjvndim}}

The main ingredients of the proof in \cite{ghj} of Theorem \ref{ghjvndim} are a locally compact unimodular group $G$ and a lattice $\Gamma$ in $G$ satisfying the following criteria:
\begin{itemize}
    \item[(i)] $\Gamma$ has the property that every one of its non-trivial conjugacy classes has infinitely many elements, and 
    \item[(ii)] $G$ has an irreducible unitary representation that is equivalent to a subrepresentation of the right regular representation.
\end{itemize}
When combined with Remark \ref{projrep} on projective representations, the proof of Theorem \ref{ghjvndim} gives Corollary \ref{padicvndim}: 

\begin{cor} \label{padicvndim} 
Let $F$ be a nonarchimedean local field of characteristic $0$, let $\Gamma$ be a lattice in $PGL(2,F)$, and let $\pi : GL(2,F) \rightarrow \UU(\HH)$ be a discrete series representation. Assume that every non-trivial conjugacy class of $\Gamma$ has infinitely many elements.  Then provided that $\Gamma$ has trivial second cohomology group, the restriction to $\Gamma$ of the projective representation of $PGL(2,F)$ obtained from the representation $\pi$ of $GL(2,F)$ gives a representation of $\Gamma$ on $\HH$ that extends to a representation of the \emph{II}$_1$ factor $R\Gamma$, with von Neumann dimension given by 
\begin{align*}
    \emph{dim}_{R\Gamma} \HH = \emph{covol}(\Gamma) \cdot \emph{d}_\pi,
\end{align*}
where $\emph{d}_\pi$ denotes the formal dimension of $\pi$.
\end{cor}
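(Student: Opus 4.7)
The plan is to invoke the abstract version of the proof of Theorem~\ref{ghjvndim} singled out by the author: that argument requires only a locally compact unimodular group $G$ with an irreducible subrepresentation of its right regular representation (criterion (ii)) together with an icc lattice $\Gamma\subset G$ (criterion (i)). Taking $G=PGL(2,F)$, which is locally compact and unimodular, reduces the problem to (a) producing from $\pi$ a genuine unitary representation of $\Gamma$ on $\HH$ that is weakly contained in the regular representation of $\Gamma$, and (b) running the GHJ computation to identify the resulting von Neumann dimension.

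For step (a) I would first descend $\pi$ to a projective representation $\overline{\pi}$ of $PGL(2,F)$. By Schur's lemma $\pi$ has a central character $\omega_\pi$, so $\pi$ is well-defined modulo scalars on $PGL(2,F)$; square-integrability modulo the center $Z$ realizes $\overline{\pi}$, via matrix coefficients, as a projective subrepresentation of the right regular representation of $PGL(2,F)$. Restricting to $\Gamma$ then yields a projective representation whose associated cocycle class lies in $H^{2}(\Gamma,\C^{\times})$; the hypothesis that this group vanishes forces the cocycle to be a coboundary, and rescaling $\overline{\pi}|_{\Gamma}$ by the corresponding $1$-cochain produces a genuine unitary representation $\rho$ of $\Gamma$ on $\HH$, exactly as in Remark~\ref{projrep}.

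For step (b) I would reproduce the proof of \cite{ghj} Theorem 3.3.2 verbatim. Because $\rho$ is assembled from matrix coefficients of a subrepresentation of the regular representation of $PGL(2,F)$, it is weakly contained in the regular representation of $\Gamma$, so it extends to a normal representation of $R\Gamma$; the factor property of $R\Gamma$ follows from criterion (i). The von Neumann dimension is then computed by pairing Schur orthogonality modulo $Z$, which contributes the factor $\text{d}_\pi^{-1}$, against integration over a fundamental domain for $\Gamma$ in $PGL(2,F)$, which contributes the factor $\text{covol}(\Gamma)$, yielding the stated formula.

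I expect the main obstacle to be the careful bookkeeping of the central character $\omega_\pi$ and the Haar measure normalizations as one passes between $GL(2,F)$, $Z$, $PGL(2,F)$, and $\Gamma$, along with verifying that the weak containment argument applies to the rescaled representation $\rho$ rather than to $\overline{\pi}$ itself; the detailed Haar normalizations are precisely what Section~\ref{comp} is set up to handle. Once those issues are under control, every structural element of the real-analytic proof transfers without modification to the nonarchimedean setting.
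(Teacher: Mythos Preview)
Your proposal is correct and follows exactly the paper's approach: invoke criteria (i) and (ii) isolated from the GHJ proof with $G=PGL(2,F)$, and use Remark~\ref{projrep} to trivialize the restricted $2$-cocycle. One terminological caveat: weak containment in the regular representation of $\Gamma$ is not enough to extend to $R\Gamma$; what you need (and what you in fact have, via the embedding of $\HH$ into $L^2(G/Z,\omega_\pi)\cong l^2(\Gamma)\otimes L^2(D)$ after untwisting) is that $\rho$ is a subrepresentation of a \emph{multiple} of $l^2(\Gamma)$.
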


\section{Computations} \label{comp}

\subsection*{Notation}

If $F$ is a nonarchimedean local field, let $\fo_F$ denote its integers, let $\fp_F$ denote the maximal (prime) ideal of $\fo_F$, let $\varpi_F$ denote a prime element, so that $\varpi_F \fo_F =\fp_F $, let $q$ denote the order of the residue field, so that $\fo_F / \fp_F \cong \F_q $, and define the groups
\begin{align*}
    U_F^0 &= U_F = \fo_F^\times \\
    U_F^i &= 1 + \fp_F^i \qquad (i \geq 1).
\end{align*}
\subsection{Computing covolumes of torsion-free lattices in $PGL(2,F)$}

\begin{thm} \label{ihathm}
(\cite{iha} Theorem 1 and Corollary)
Let $F$ be a local nonarchimedean field.  Any torsion-free discrete subgroup $\Gamma$ is isomorphic to a free group on at most countably many generators.  If moreover $\Gamma \backslash PGL(2,F)$ is compact, then the number of free generators of $\Gamma$ is given by $$n=\frac{1}{2}(q-1)c+1,$$ 
where 
$$ c=\vert \Gamma \backslash PGL(2,F) / PGL(2,\fo_F) \vert .$$
\end{thm}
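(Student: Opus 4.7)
The natural tool is the Bruhat--Tits tree $\cT$ of $PGL(2,F)$: its vertices are homothety classes of $\fo_F$-lattices in $F^2$, edges connect classes $[L],[L']$ with $\varpi_F L\subsetneq L'\subsetneq L$, and the tree is $(q+1)$-regular. The group $PGL(2,F)$ acts on $\cT$ transitively on vertices, with the stabilizer of the standard vertex $[\fo_F\oplus\fo_F]$ equal to the compact open subgroup $PGL(2,\fo_F)$. My plan is to transfer the statement about $\Gamma$ to a statement about the quotient graph $\Gamma\backslash\cT$ and then apply the standard Euler characteristic formula for fundamental groups of finite graphs.

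First I would show that $\Gamma$ acts freely on $\cT$. If $\gamma\in\Gamma$ stabilizes a vertex $v$, then $\gamma$ lies in a conjugate of the compact open group $PGL(2,\fo_F)$; since $\Gamma$ is discrete, $\gamma$ lies in a finite subgroup of $\Gamma$, hence $\gamma=1$ by torsion-freeness. If $\gamma$ inverts an edge $e$, then $\gamma^2$ fixes both endpoints of $e$, so $\gamma^2=1$ by the previous step, forcing $\gamma=1$ again. Thus $\Gamma$ acts freely on the vertex and edge sets of $\cT$, and $\Gamma\backslash\cT$ is a well-defined graph. Since $\cT$ is a tree, Bass--Serre theory (or elementary covering space theory) identifies $\Gamma$ with $\pi_1(\Gamma\backslash\cT)$, which is free on a set whose cardinality equals the first Betti number of the quotient graph. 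This already gives the first assertion: $\Gamma$ is free on at most countably many generators (countability following from the countability of the vertex and edge sets of $\cT$).

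Next, assume $\Gamma\backslash PGL(2,F)$ is compact. Since $PGL(2,F)/PGL(2,\fo_F)$ is the vertex set of $\cT$ (as a $PGL(2,F)$-set), the vertex set of the quotient graph $\Gamma\backslash\cT$ is in bijection with the double coset space $\Gamma\backslash PGL(2,F)/PGL(2,\fo_F)$, which by cocompactness has finite cardinality $c$. Because $\Gamma$ acts freely and $\cT$ is $(q+1)$-regular, every vertex of $\Gamma\backslash\cT$ has degree $q+1$; counting oriented edges gives $c(q+1)$ and therefore $c(q+1)/2$ unoriented edges. The fundamental group of a finite connected graph is free of rank equal to its first Betti number $E-V+1$, so
\begin{align*}
n \;=\; \frac{c(q+1)}{2}-c+1 \;=\; \frac{1}{2}(q-1)c+1,
\end{align*}
as claimed.

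The only genuine obstacle is justifying freeness of the action on the tree, and more subtly, the connectedness of $\Gamma\backslash\cT$ needed for the rank formula. Connectedness of $\cT$ itself is standard; connectedness of the quotient is automatic since $\cT$ is connected and the quotient map is surjective. The rest is bookkeeping. A minor point worth checking is that $c(q+1)$ is even so that the edge count is an integer: if $c(q-1)$ is an integer, then $c(q+1) = c(q-1)+2c$ is as well, and one sees from the formula $n-1 = \frac{1}{2}(q-1)c$ that integrality of the edge count is equivalent to $n$ being an integer, which it must be as the rank of a free group.
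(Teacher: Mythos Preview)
Your argument is correct and is the standard modern proof via the Bruhat--Tits tree and Bass--Serre theory. Note, however, that the paper does not supply its own proof of this theorem: it is quoted from Ihara's paper and used as a black box, so there is no in-paper proof to compare against. Your approach is precisely the one underlying Ihara's result (recast in tree language by Serre), so nothing is lost.
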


\begin{lem} \label{covol}
Let $\Gamma$ be as in Theorem \ref{ihathm}, and let $n$ be the number of it generators.  If Haar measure on $PGL(2,F)$ is normalized so that  
\begin{align*}
    \emph{vol}( PGL(2,\fo_F) )=\frac{1}{2}(q-1),  
\end{align*}
then
\begin{align*}
    \emph{vol}( \Gamma \backslash PGL(2,F) ) = n-1.
\end{align*}
\end{lem}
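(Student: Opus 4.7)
The plan is to combine Ihara's index $c = |\Gamma \backslash PGL(2,F) / PGL(2,\fo_F)|$ with a direct computation of the covolume via the double coset decomposition. Pick representatives $g_1, \ldots, g_c$ and write $PGL(2,F) = \bigsqcup_{i=1}^{c} \Gamma g_i PGL(2,\fo_F)$. Passing to the $\Gamma$-quotient and using the standard identification $\Gamma \backslash \Gamma g PGL(2,\fo_F) \cong (g^{-1}\Gamma g \cap PGL(2,\fo_F)) \backslash PGL(2,\fo_F)$, this turns the covolume into
\[
\text{vol}(\Gamma \backslash PGL(2,F)) = \sum_{i=1}^{c} \text{vol}\bigl((g_i^{-1}\Gamma g_i \cap PGL(2,\fo_F)) \backslash PGL(2,\fo_F)\bigr).
\]

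The substantive step is to show that each intersection $g_i^{-1}\Gamma g_i \cap PGL(2,\fo_F)$ is trivial. The subgroup $PGL(2,\fo_F)$ is compact and open in $PGL(2,F)$, and each conjugate of the discrete group $\Gamma$ is itself discrete, so the intersection is finite. Being a subgroup of the torsion-free group $g_i^{-1}\Gamma g_i$, this finite group must be trivial. Hence every term in the sum equals $\text{vol}(PGL(2,\fo_F)) = \frac{1}{2}(q-1)$.

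Summing over the $c$ double cosets gives $\text{vol}(\Gamma \backslash PGL(2,F)) = c \cdot \frac{1}{2}(q-1)$, and substituting Ihara's formula $n = \frac{1}{2}(q-1)c + 1$ yields exactly $n-1$. I do not anticipate serious obstacles; the only care needed is ensuring the compatibility of Haar measure with the double coset decomposition, which uses unimodularity of $PGL(2,F)$. The conceptual core is that torsion-freeness plus discreteness forces every local stabilizer to vanish, so counting free generators via Ihara's formula is equivalent to weighing the covolume.
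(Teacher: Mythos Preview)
Your argument is correct and follows essentially the same route as the paper: both compute the covolume as $c \cdot \text{vol}(PGL(2,\fo_F))$ and then invoke Ihara's formula $c = 2(n-1)/(q-1)$. The paper simply asserts that with $\text{vol}(PGL(2,\fo_F))=1$ the covolume equals the cardinality $c$ of the double coset space and then rescales, whereas you justify this equality explicitly via the torsion-free/trivial-stabilizer argument; that is the only difference.
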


\begin{proof}
If Haar measure on $PGL(2,F)$ is normalized so that
\begin{align*}
    \text{vol}( PGL(2,\fo_F) )=1,
\end{align*}
then 
\begin{align*}
    \text{vol}( \Gamma \backslash PGL(2,F) ) = 
    \text{vol}( \Gamma \backslash PGL(2,F) / PGL(2,\fo_F) ),
\end{align*}
which by Theorem \ref{ihathm} is equal to 
\begin{align*}
    c = 2\frac{n-1}{q-1}.
\end{align*}
Covolume scales proportionally with Haar measure, so if Haar measure on $PGL(2,F)$ is normalized so that  
\begin{align*}
    \text{vol}( PGL(2,\fo_F) )=\frac{1}{2}(q-1),  
\end{align*}
then
\begin{align*}
    \text{vol}( \Gamma \backslash PGL(2,F) ) =
    \frac{1}{2}(q-1) \cdot 2\frac{n-1}{q-1} = n-1.
\end{align*}
\end{proof}

\subsection{Computing formal dimensions of discrete series representations of $GL(2,F)$}

The calculation of formal dimensions of discrete series representations of $GL(2,F)$ is already contained in the paper of Corwin, Moy, and Sally \cite{cms}, in which the authors calculate formal dimensions of discrete series representations of $GL(n,F)$, where $n$ is relatively prime to $q$, using Howe's notion of an admissible character of an extension of degree $n$ over $F$.  Here, we will carry out their same calculation in the notation of Bushnell and Henniart's book on $GL(2,F)$ \cite{buhe}, for the benefit of readers who may be unfamiliar with representation theory of $\fp$-adic groups, in case they wish to consult the book \cite{buhe} for more details.

Note that there are no Hilbert spaces in \cite{buhe}. But by Section 2 of \cite{cartier}, the irreducible smooth representations of $GL(2,F)$ in \cite{buhe} admit completions to the irreducible unitary representations of $GL(2,F)$ that we need for Corollary \ref{padicvndim}. 

Let $G=GL(2,F)=\text{Aut}_F(F \oplus F)$ and let $A=M_2(F)=\text{End}_F(F \otimes F)$. Let $Z$ denote the center of $G$, which we may identify with $F^\times$.

Let $\UU$ be a chain order in $A$.  Then either 
\begin{align*}
\UU = \MM = \begin{pmatrix}
\fo_F & \fo_F \\
\fo_F & \fo_F
\end{pmatrix},    
\end{align*} or
\begin{align*}
\UU = \JJ = \begin{pmatrix}
\fo_F & \fo_F \\
\fp_F & \fo_F
\end{pmatrix}.    
\end{align*}

Let $\fP$ denote the Jacobson radical of $\UU$, $\fP=\text{rad}\UU$.  There exists an element $\Pi \in G$ such that $\fP=\Pi \UU = \UU \Pi$.  So either 
\begin{align*}
\fP = \text{rad}\MM = \begin{pmatrix}
\varpi_F & 0 \\
0 & \varpi_F
\end{pmatrix} \MM,    
\end{align*} or
\begin{align*}
\fP = \text{rad}\JJ = \begin{pmatrix}
0 & 1 \\
\varpi_F & 0
\end{pmatrix}\JJ.    
\end{align*}

We define the groups 
\begin{align*}
    U_\UU^0 &= U_\UU = \UU^\times \\
    U_\UU^i &= 1 + \fP^i \qquad (i \geq 1).
\end{align*}

For example, 
\begin{align*}
    U_\MM & = GL(2,\fo_F) = K, \qquad & \text{a maximal compact subgroup of $G$} \\
    U_\JJ & = I, \qquad & \text{the Iwahori subgroup of $G$.}
\end{align*}

The surjection $\fo_F \rightarrow \fo_F / \fp_F \cong \F_q$ induces the maps
\begin{align} \label{tower}
    U_\MM \qquad & \longrightarrow \qquad GL(2, \F_q) \\
    U_\JJ \qquad & \longrightarrow \qquad 
    \begin{pmatrix}
    * & *  \\
    0 & *
    \end{pmatrix} \nonumber  \\
    U^1_\JJ \qquad & \longrightarrow \qquad 
    \begin{pmatrix}
    1 & * \nonumber \\
    0 & 1
    \end{pmatrix} \nonumber \\
    U^1_\MM \qquad & \longrightarrow \qquad 
    \begin{pmatrix}
    1 & 0 \\
    0 & 1
    \end{pmatrix}. \nonumber  
\end{align}

The normalizer of $U_\UU$ in $G$ is 
\begin{align*}
    N_G(U_\UU)=\langle \Pi \rangle \ltimes U_\UU.
\end{align*}

We now describe the discrete series representations of $G$: the Steinberg representation (and its twists by characters), and the supercuspidal representations. 

The \textit{Steinberg representation} of $G$ is induced from the trivial character on the subgroup of upper-triangular matrices in $G$.  This representation is shown to be square-integrable modulo $Z$ in 17.5-17.9 of \cite{buhe}.  Twists of the Steinberg representation by characters of $F^\times$ are also square-integrable modulo $Z$.    

\begin{lem} \label{Stdim}
If Haar measure on $G/Z$ is normalized so that  
\begin{align*}
    \emph{vol}( Z.K/Z )=\frac{1}{2}(q-1),  
\end{align*}
then the formal dimension of the Steinberg representation is 
\begin{align*}
    \emph{d}_{\emph{St}}=1.
\end{align*}
\end{lem}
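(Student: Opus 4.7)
The plan is to compute the formal dimension via Schur's orthogonality relation applied to a matrix coefficient associated to a unit Iwahori-fixed vector, and then verify that the prescribed Haar measure normalization makes the answer equal to $1$.

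First, I would use that the Steinberg representation $(\pi, \HH)$ has a one-dimensional subspace of vectors fixed by the Iwahori subgroup $I = U_\JJ$, a standard fact obtained either from the realization of $\pi$ as a constituent of the principal series $\text{Ind}_B^G \mathbf{1}$ (whose Iwahori-fixed subspace has dimension $|W| = 2$, with the trivial constituent accounting for one dimension) or from the realization of $\pi$ on harmonic cochains on the Bruhat--Tits tree of $PGL(2,F)$. Choose a unit vector $v$ in this line and set $\phi(g) = \langle \pi(g) v, v \rangle$, so that Schur's relation reads
\[
\frac{1}{\text{d}_{\text{St}}} = \int_{G/Z} |\phi(g)|^2 \, d\dot g.
\]

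Second, the function $\phi$ is bi-$I$-invariant and transforms trivially under $Z$ (the Steinberg of $GL(2,F)$ has trivial central character). Therefore
\[
\int_{G/Z} |\phi(g)|^2 \, d\dot g = \sum_{w} \text{vol}(IwI \cdot Z / Z) \cdot |\phi(w)|^2,
\]
where $w$ ranges over representatives of $I \backslash G / ZI$, a set in natural bijection with the affine Weyl group of $PGL(2,F)$ (the infinite dihedral group). The values $\phi(w)$ decay as signed powers of $q^{-1}$ in the length of $w$, computed explicitly in either of the two models above, while the double-coset volumes scale as integer powers of $q$ with ratios governed by the Bruhat--Tits building. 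The resulting expression is a convergent geometric series in $q$ whose sum, multiplied by $\text{d}_{\text{St}}$, should equal $1$.

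Third, I would reconcile the given normalization $\text{vol}(Z.K/Z) = (q-1)/2$ with the Iwahori normalization. From the tower (\ref{tower}), $[K : I] = q+1$, so $\text{vol}(ZI/Z) = (q-1)/(2(q+1))$. Substituting into the geometric series pins down $\text{d}_{\text{St}} = 1$. I expect the main obstacle to be the careful bookkeeping of Haar measure normalizations as one passes between $G$, $Z$, and $G/Z$ and matches the Iwahori volume to the prescribed normalization of $Z.K/Z$; once that is done, the evaluation of the geometric series is elementary.
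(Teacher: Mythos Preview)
Your proposal is correct and follows essentially the same route as the paper: compute the $L^2$-norm of the matrix coefficient attached to the Iwahori-fixed vector by summing a geometric series over the affine Weyl group (the paper, citing \cite{buhe}, writes this as $2\sum_{w\in\W_0} q^{-l(w)} = 2(q+1)/(q-1)$ in the normalization $\text{vol}(Z.I/Z)=1$), and then convert to the desired normalization using $[K:I]=q+1$. The only difference is that the paper first computes $\text{d}_{\text{St}}$ in the Iwahori normalization and then rescales, whereas you plan to plug in $\text{vol}(ZI/Z)=(q-1)/(2(q+1))$ directly; these are equivalent.
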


\begin{proof}
We start from the last line of the proof that the Steinberg representation is square-integrable modulo $Z$ in \cite{buhe}, 17.5 Theorem, 
where Haar measure on $G/Z$ is normalized so that vol$(Z.I/Z)=1$.  
We need the group 
\begin{align*}
    \W_0 = \left\langle 
    \begin{pmatrix}
    0 & 1 \\
    1 & 0
    \end{pmatrix}, 
    \begin{pmatrix}
    0 & \varpi_F^{-1} \\
    \varpi_F & 0
    \end{pmatrix}
    \right\rangle.
\end{align*}
(This group appears because a set of coset representatives for $I \backslash G / I$ is given by the group $\W=\langle \Pi \rangle \ltimes \W_0$, where $\Pi = 
\begin{psmallmatrix}
0 & 1 \\
\varpi_F & 0
\end{psmallmatrix}
$; see \cite{buhe} 17.1 Theorem.)
Expanding a geometric series, using the fact that lengths of elements of $\mathbb{W}_0$ are $0,1,1,2,2,3,3,\ldots$, we have
\begin{align*}
\int_{G/Z} \vert f(g) \vert ^2 d \dot{g} &= 2 \sum_{g \in \mathbb{W}_0 } q^{-l(g)} 
= 2 \left( 2 \left( \frac{1}{1-\frac{1}{q}} \right) -1 \right) \\
&= 2 \left( 2 \left( \frac{q}{q-1} \right) -1 \right) 
= 2 \left( \frac{2q}{q-1} - \frac{q-1}{q-1} \right) \\
&= 2 \left( \frac{q+1}{q-1} \right).
\end{align*}
Because the matrix coefficient $f(g)$ is equal to $1$ when $g=\id$, 
we see that if Haar measure on $G/Z$ is normalized so that  
\begin{align*}
    \text{vol}( Z.I/Z )=1,  
\end{align*}
then the formal dimension of the Steinberg representation is 
\begin{align*}
    \text{d}_{\text{St}}=\frac{1}{2}(q-1)(q+1)^{-1}.
\end{align*}
Counting elements of the finite groups in the tower of surjections (\ref{tower}) gives 
$$\left[ Z.K/Z : Z.I/Z \right] = \left[ K : I \right] = q+1.$$
Formal dimension is inversely proportional to Haar measure, so if Haar measure on $G/Z$ is normalized such that  
\begin{align*}
    \text{vol}( Z.I/Z )=\frac{1}{2}(q-1)(q+1)^{-1},  
\end{align*}
which is equivalent to normalizing Haar measure on $G/Z$ such that 
\begin{align*}
    \text{vol}( Z.K/Z )=\frac{1}{2}(q-1),  
\end{align*}
then the formal dimension of the Steinberg representation is 
\begin{align*}
    \text{d}_{\text{St}}=\frac{1}{2}(q-1)(q+1)^{-1} \cdot \left( \frac{1}{2}(q-1)(q+1)^{-1} \right) ^{-1} = 1.
\end{align*}
\end{proof}

\begin{rem}
Lemma \ref{Stdim} above agrees with equation (2.2.2) in \cite{cms}, which says 
$$ \text{d}_{\text{St}} \cdot \text{vol}(K.Z/Z) = \frac{1}{n} \prod_{k=1}^{n-1}(q^k-1),$$
where $\text{St}$ denotes the Steinberg representation of $GL(n,F)$, and $K$ is a maximal compact subgroup of $GL(n,F)$.
\end{rem}

A \textit{supercuspidal representation} of $G$ is an irreducible unitary representation of $G$ that is not equivalent to a subrepresentation of any representation induced from a character on the subgroup of upper-triangular matrices obtained by inflating a character on the subgroup of diagonal matrices.  These are shown to have matrix coefficients that are compactly supported (hence square-integrable) modulo $Z$ in 10.1-10.2 in \cite{buhe}.  All supercuspidal representations of $G$ are ``compactly induced'' from irreducible finite-dimensional representations of certain open subgroups of $G$ that contain $Z$ and are compact modulo $Z$ (\cite{buhe} 20.2 Theorem). The matrix coefficients of the compactly induced representations are essentially those of the inducing representations (an observation of Mautner in \cite{mau}), which leads to Proposition \ref{cpctdim} below.

\begin{prop} \label{cpctdim} 
(\cite{knra} Proposition 1.2)  Let $\pi$ be a supercuspidal representation of $G$ compactly induced from an irreducible representation of $J<G$, where $Z<J$ and $J/Z$ is compact in $G/Z$.Then the formal dimension $\emph{d}_\pi$ of $\pi$ is given by
\begin{align*}
    \emph{d}_\pi = \frac{\emph{dim}\Lambda}{\emph{vol}(Z.J/Z)}.
\end{align*}
\end{prop}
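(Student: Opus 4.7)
The plan is to deduce $d_\pi$ by applying Schur orthogonality twice: once to the finite-dimensional irreducible representation $\Lambda$ of the compact-mod-center group $J$ (viewing $\Lambda$ as a representation of the compact group $J/Z$, which is legitimate because $\Lambda$ and $\pi$ share a central character), and once to $\pi$ itself on $G/Z$. The compact induction $\pi = \text{c-Ind}_J^G \Lambda$ supplies a family of matrix coefficients of $\pi$ that vanish outside $J$ and restrict there to matrix coefficients of $\Lambda$; this converts the Schur integral for $\pi$ into the Schur integral for $\Lambda$.

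Concretely, I realize $V_\pi$ as the space of functions $f: G \to V_\Lambda$ satisfying $f(jg) = \Lambda(j) f(g)$ and compactly supported modulo $J$, with inner product $\langle f_1, f_2 \rangle = \sum_{x \in J \backslash G} \langle f_1(x), f_2(x) \rangle_{V_\Lambda}$. For each $v \in V_\Lambda$, I take $f_v \in V_\pi$ to be the function supported on $J$ with $f_v(j) = \Lambda(j) v$. A short calculation (Mautner's observation) yields
\begin{align*}
\langle \pi(g) f_v, f_{v'} \rangle = \begin{cases} \langle \Lambda(g) v, v' \rangle_{V_\Lambda} & g \in J \\ 0 & g \notin J, \end{cases}
\end{align*}
and $\langle f_v, f_{v'} \rangle = \langle v, v' \rangle_{V_\Lambda}$: in the defining sum, the only coset potentially contributing is the trivial one, and it contributes zero unless $g \in J$.

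Since this matrix coefficient vanishes off $J$ and $Z \subset J$, the integral defining $1/d_\pi$ collapses from $G/Z$ onto $J/Z$. Applying Schur orthogonality for the finite-dimensional irreducible representation $\Lambda$ of the compact group $J/Z$ gives
\begin{align*}
\int_{G/Z} \langle \pi(g) f_{v_1}, f_{v_2} \rangle \overline{\langle \pi(g) f_{v_3}, f_{v_4} \rangle} \, d\dot{g}
&= \int_{J/Z} \langle \Lambda(j) v_1, v_2 \rangle \overline{\langle \Lambda(j) v_3, v_4 \rangle} \, d\dot{j} \\
&= \frac{\text{vol}(J/Z)}{\dim \Lambda} \langle v_1, v_3 \rangle \overline{\langle v_2, v_4 \rangle}.
\end{align*}
The definition of $d_\pi$ requires this to equal $\frac{1}{d_\pi} \langle f_{v_1}, f_{v_3} \rangle \overline{\langle f_{v_2}, f_{v_4} \rangle} = \frac{1}{d_\pi} \langle v_1, v_3 \rangle \overline{\langle v_2, v_4 \rangle}$. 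Since $Z \subset J$ forces $\text{vol}(Z.J/Z) = \text{vol}(J/Z)$, solving yields $d_\pi = \dim \Lambda / \text{vol}(Z.J/Z)$.

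The main obstacle is careful bookkeeping rather than any deep step: I must verify that the inner product on the compact induction is well-defined and makes $\pi$ unitary, confirm the support statement for the matrix coefficient, and ensure that the Haar measure on $J/Z$ used in Schur orthogonality for $\Lambda$ is the one inherited from the Haar measure on $G/Z$ used in the definition of $d_\pi$. Each ingredient is elementary, so the proposition is essentially a repackaging of compact-group Schur orthogonality through the compact-induction formalism.
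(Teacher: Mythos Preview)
Your argument is correct and is precisely the standard one: the paper does not supply its own proof but cites the result from \cite{knra}, remarking just beforehand that it follows from Mautner's observation that matrix coefficients of a compactly induced representation are supported on the inducing subgroup and coincide there with matrix coefficients of $\Lambda$ --- exactly the mechanism you exploit. One small wording issue: $\Lambda$ does not literally descend to $J/Z$ unless the central character is trivial; what descends is the product $\langle \Lambda(\cdot)v_1,v_2\rangle\overline{\langle \Lambda(\cdot)v_3,v_4\rangle}$, which is all you need for the integral over $J/Z$ to be well-defined and for compact-group Schur orthogonality to apply.
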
 

We start by describing the simplest supercuspidal representation of $G$ and calculating its formal dimension.  

\begin{examp} \label{dzsc} A \textit{cuspidal representation} of $GL(2,\F_q)$ is an irreducible  representation of $GL(2,\F_q)$ on a (finite-dimensional) complex vector space that is not equivalent to a subrepresentation of any representation induced from a character on upper-triangular matrices obtained by inflating a character on diagonal matrices; all these representations have dimension $q-1$ (\cite{ps} Proposition 10.2, or \cite{bum} Proposition 4.1.5).  Let $\Lambda_0$ be a cuspidal representation of $GL(2,\F_q)$.  Inflate this representation $\Lambda_0$ of $GL(2,\F_q) = U_\MM / U_\MM^1$ to a representation $\Lambda$ of $K=U_\MM$ trivial on $U^1_\MM$, then compactly induce $\Lambda$ from $K$ to a representation $\pi$ of $G$. This $\pi$ is a supercuspidal representation of $G$ (\cite{buhe} 11.5 Theorem (1), or \cite{bum} Theorem 4.8.1) .  
\end{examp}

\begin{lem} \label{dimdzsc}
If Haar measure on $G/Z$ is normalized so that  
\begin{align*}
    \emph{vol}( Z.K/Z )=\frac{1}{2}(q-1),  
\end{align*}
then the formal dimension of the supercuspidal representation in Example \ref{dzsc} is 
\begin{align*}
    \emph{d}_{\pi}=2.
\end{align*}
\end{lem}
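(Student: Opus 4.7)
The proof is a direct application of Proposition \ref{cpctdim} together with the dimension formula for cuspidal representations of $GL(2,\F_q)$, once the relevant objects are identified.

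First I would identify the open subgroup $J$ and the representation $\Lambda$ from Example \ref{dzsc} in the form required by Proposition \ref{cpctdim}. Since the supercuspidal $\pi$ is compactly induced from the representation $\Lambda$ of $K=U_\MM$ (inflated from the cuspidal representation $\Lambda_0$ of $GL(2,\F_q) \cong U_\MM/U_\MM^1$), the relevant inducing subgroup containing $Z$ is $J = Z.K$, with the central character of $\Lambda$ extended to $Z$ in the usual way. The quotient $Z.K/Z \cong K/(Z \cap K) = K/\fo_F^\times$ is compact in $G/Z$, so the hypotheses of Proposition \ref{cpctdim} are satisfied.

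Next I would compute the two quantities entering the formula in Proposition \ref{cpctdim}. The dimension of $\Lambda$ equals the dimension of $\Lambda_0$, and by the cited results (\cite{ps} Proposition 10.2 or \cite{bum} Proposition 4.1.5) every cuspidal representation of $GL(2,\F_q)$ has dimension $q-1$; hence $\text{dim}\,\Lambda = q-1$. The volume $\text{vol}(Z.J/Z) = \text{vol}(Z.K/Z)$ is equal to $\tfrac{1}{2}(q-1)$ by the chosen normalization of Haar measure.

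Substituting into Proposition \ref{cpctdim} yields
\begin{align*}
    \text{d}_\pi = \frac{\text{dim}\,\Lambda}{\text{vol}(Z.J/Z)} = \frac{q-1}{\tfrac{1}{2}(q-1)} = 2,
\end{align*}
which is the desired value. There is no genuine obstacle in this argument; the only subtlety is verifying that the compact induction from $K$ described in Example \ref{dzsc} matches the framework of Proposition \ref{cpctdim} (where the inducing subgroup contains $Z$), which amounts to the routine observation that compact induction from $K$ and from $Z.K$ yield the same representation of $G$ up to the choice of central character, and that this choice does not affect the formal dimension.
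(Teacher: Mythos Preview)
Your proof is correct and follows exactly the paper's approach: apply Proposition~\ref{cpctdim} with $\dim\Lambda = q-1$ and $\text{vol}(Z.K/Z)=\tfrac{1}{2}(q-1)$ to get $\text{d}_\pi = 2$. The paper's own proof is just the one-line computation, so your additional remarks about identifying $J=Z.K$ and the passage from $K$ to $Z.K$ are careful elaborations rather than a different argument.
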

\begin{proof}
From Proposition \ref{cpctdim},
\begin{align*}
    \text{d}_\pi = \frac{\text{dim}\rho}{\text{vol}(Z.J/Z)}
    = \frac{q-1}{\frac{1}{2}(q-1)} = 2.
\end{align*}
\end{proof}

We now describe the other supercuspidal representations and show how to compute their formal dimensions.  

Let $J$ be a compact open subgroup of $G$, and let $\Lambda$ be the finite-dimensional representation from which the supercuspidal representation $\pi$ of $G$ will be induced.  From Sections 15-16 of \cite{buhe}, $J$ is of the form
\begin{align*}
    J = E^\times U_\UU^{ \left\lfloor \frac{n+1}{2} \right\rfloor }
\end{align*}
where $E$ is an extension of $F$ of degree $2$; $\UU=\JJ$ if $E/F$ is ramified, and $\UU=\MM$ if $E/F$ is unramified; and $n\geq1$.  Note $J$ is contained in $\langle \Pi \rangle \ltimes U_\UU$, and $J$ contains and is compact modulo $Z$.  Each supercuspidal representation of $G$ besides the supercuspidal representation constructed in Example \ref{dzsc} corresponds to one of the following pairs of $J$ and $\Lambda$:
\[
\begin{tabular}{c|c|c|c} \label{scparam}
    $\UU$ & $E/F$ & $n$ &$\text{dim}\Lambda$ \\ \hline
    $\MM$ &  $\text{unramified}$ & $\text{even}$ & $q$ \\
    $\MM$ &  $\text{unramified}$ & $\text{odd}$ & $1$ \\
    $\JJ $ &  $\text{ramified}$ & $\text{odd}$ & $1$ 
\end{tabular}
\]

We now calculate the formal dimensions of these supercuspidal representations.  This amounts to calculating vol$(J)$, which will be calculated from the index of $J$ in $\langle \Pi \rangle \ltimes U_\UU$.  

\begin{rem}
We would like to emphasize that the calculations in Lemma \ref{index1} and Lemma \ref{index2} below are already contained in \cite{cms}.  All we are doing here is specializing to the case $n=2$ and using the notation of \cite{buhe}.
\end{rem}

\begin{lem} \label{index1}
Let $i\geq1$ be an integer.  We have 
\begin{itemize}
 \item[(i)]   $\left[ U_\UU : U_\UU^1 \right] = \vert GL(f, \F_q) \vert ^e$ 
 \item[(ii)]   $\left[ U^1_\UU : U^i_\UU \right] = q^{2f(i-1)}$ 
  \item[(iii)]  $\left[ U_E : U^1_E \right] = (q^f-1)$ 
  \item[(iv)]  $\left[ U^1_E : U^i_E \right] = (q^f)^{i-1}$
  \item[(v)] $\left[ \langle \Pi \rangle \ltimes U_\UU : U_\UU \right] = e$
\end{itemize}
where $e=1$ and $f=2$ if $\UU=\MM$ and $E/F$ is unramified; and $e=2$ and $f=1$ if $\UU=\JJ$ and $E/F$ is ramified. 
\end{lem}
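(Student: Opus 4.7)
The plan is to establish each index formula directly, using reduction modulo powers of the maximal ideal and the surjections already exhibited in the tower (\ref{tower}). The five statements split naturally: (i) and (ii) concern the filtration of $U_\UU$ by $\fP$-powers; (iii) and (iv) are the analogous statements for the multiplicative group of $E$; and (v) is a separate computation for the normalizer of $U_\UU$.

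First I would handle (i). For $\UU=\MM$, reduction modulo $\fp_F$ gives the displayed surjection $U_\MM\to GL(2,\F_q)$ with kernel exactly $U_\MM^1=1+\varpi_F M_2(\fo_F)$, yielding $|GL(2,\F_q)|$ with $f=2,e=1$. For $\UU=\JJ$, the same reduction sends $U_\JJ$ onto the Borel subgroup of upper-triangular invertible matrices and sends $U_\JJ^1$ onto its unipotent radical; comparing kernels, the induced map on the quotient is a bijection onto the diagonal torus $\F_q^\times\times\F_q^\times$ of order $(q-1)^2=|GL(1,\F_q)|^2$, giving $f=1,e=2$.

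For (ii), I would invoke the standard isomorphism of abelian groups $U_\UU^j/U_\UU^{j+1}\cong\fP^j/\fP^{j+1}$ induced by $1+x\mapsto x$, so that by telescoping $[U_\UU^1:U_\UU^i]=\prod_{j=1}^{i-1}|\fP^j/\fP^{j+1}|$. Each successive quotient has the same order as $\fP/\fP^2$, obtainable via left multiplication by $\Pi^{j-1}$, and a direct computation shows this common order is $q^4$ when $\UU=\MM$ and $q^2$ when $\UU=\JJ$, both of the form $q^{2f}$. Parts (iii) and (iv) are the local-field analogues in the single variable of $E$: $U_E/U_E^1\cong(\fo_E/\fp_E)^\times=\F_{q^f}^\times$ has order $q^f-1$, and each successive quotient $U_E^j/U_E^{j+1}\cong\fp_E^j/\fp_E^{j+1}$ is a one-dimensional $\F_{q^f}$-vector space of order $q^f$. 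The residue degree $f$ is $2$ in the unramified case and $1$ in the ramified case, matching the two rows of the table.

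For (v), one uses that modulo the center $\Pi$ has order exactly $e$. In the unramified case $\Pi_\MM=\varpi_F I\in Z$, so its class is trivial modulo $Z\cdot U_\MM$, giving $e=1$. In the ramified case $\Pi_\JJ^2=\varpi_F I\in Z$ while $\Pi_\JJ\notin Z\cdot U_\JJ$ (its determinant has odd valuation, so it cannot be written as a scalar times an element of $U_\JJ$), so its class has order $2$ and $e=2$. The main piece of bookkeeping is in (ii) for $\UU=\JJ$: one has to compute $\fP_\JJ$ and $\fP_\JJ^2=\varpi_F\JJ$ in explicit matrix form and verify that the quotient contributes exactly two residue-field coordinates at each step. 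Everything else follows quickly from the surjections in (\ref{tower}) and the standard filtrations of a local field.
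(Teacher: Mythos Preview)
Your proposal is correct and follows essentially the same approach as the paper's proof, which is very terse: the paper disposes of each item with a one-line hint (the tower of surjections for (i), the bijection $1+\fP\leftrightarrow\fP$ for (ii), the residue field of $E$ for (iii) and (iv), and $|\langle\Pi\rangle/Z|$ for (v)), and you have simply expanded each of these hints with the explicit verifications. Your handling of (i) in the Iwahori case and of the successive quotients in (ii) is exactly what the paper's ``count the elements'' and ``use $[1+\fP:1+\fP^i]=[\fP:\fP^i]$'' amount to.
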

\begin{proof}
For (i), count the elements of the finite groups in the tower of surjections (\ref{tower}).  

For (ii), use 
\begin{align*}
    [1+\fP : 1 + \fP^i]=[\fP : \fP^1].
\end{align*}
For (iii), note 
\begin{align*}
    \vert U_E / U_E^1 \vert = \vert \F_{q^f} ^\times \vert.
\end{align*}
For (iv), use 
\begin{align*}
    [1+\fp_E : 1 + \fp_E^i]=[\fp_E : \fp_E^1].
\end{align*}
For (v), note 
\begin{align*}
\left[ \langle \Pi \rangle \ltimes U_\UU : U_\UU \right] = \vert \langle \Pi \rangle / Z \vert. 
\end{align*}
\end{proof}

\begin{lem} \label{index2}
$$\left[ \langle \Pi \rangle \ltimes U_\UU : E^\times U^i_\UU \right] = \frac{\vert GL(f, \F_q) \vert^e  q^{2f(i-1)}}{(q^f-1)(q^f)^{i-1}}$$
where $e=1$ and $f=2$ if $\UU=\MM$ and $E/F$ is unramified; and $e=2$ and $f=1$ if $\UU=\JJ$ and $E/F$ is ramified. 
\end{lem}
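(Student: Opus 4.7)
The plan is to reduce $[\langle \Pi\rangle \ltimes U_\UU : E^\times U^i_\UU]$ to a single index among the unit-group filtrations already computed in Lemma \ref{index1}. The first step is to realize the cyclic factor $\langle \Pi\rangle$ as sitting inside $E^\times$: in the unramified case $\Pi=\varpi_F I$ is itself a uniformizer of $E$, while in the ramified case $\Pi^2=\varpi_F I$ together with $v_E(\varpi_F)=2$ gives $v_E(\Pi)=1$. In either case $\Pi$ is a uniformizer of $E$, so $E^\times = \langle\Pi\rangle\cdot U_E$ and hence $E^\times U^i_\UU = \langle\Pi\rangle\cdot U_E\cdot U^i_\UU$.

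Next, I would use the inclusion $\langle\Pi\rangle\subset E^\times U^i_\UU$ to collapse the index onto one inside $U_\UU$. Since $\Pi$ normalizes $U_\UU$, every element $\Pi^k u$ of $\langle\Pi\rangle\ltimes U_\UU$ can be rewritten as $(\Pi^k u\Pi^{-k})\cdot \Pi^k$, with the first factor in $U_\UU$ and the second in $E^\times U^i_\UU$, so each left coset of $E^\times U^i_\UU$ in $\langle\Pi\rangle\ltimes U_\UU$ has a representative in $U_\UU$. A determinant calculation---using that $\det(\Pi^k)$ has positive $\varpi_F$-valuation for $k\neq 0$ while $\det$ maps $U_\UU$ into $\fo_F^\times$---gives $\langle\Pi\rangle\cap U_\UU=\{1\}$, from which $E^\times U^i_\UU\cap U_\UU = U_E\cdot U^i_\UU$. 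Hence
$$[\langle\Pi\rangle\ltimes U_\UU : E^\times U^i_\UU] \;=\; [U_\UU : U_E\cdot U^i_\UU].$$

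The final step computes this index via the chain $U^i_\UU\subset U_E\cdot U^i_\UU\subset U_\UU$. Multiplicativity of index and the second isomorphism theorem give
$$[U_\UU : U_E\cdot U^i_\UU] \;=\; \frac{[U_\UU : U^1_\UU]\,[U^1_\UU : U^i_\UU]}{[U_E : U_E\cap U^i_\UU]},$$
whose numerator is $|GL(f,\F_q)|^e\cdot q^{2f(i-1)}$ by parts (i)--(ii) of Lemma \ref{index1}. The main point requiring care is the filtration compatibility $U_E\cap U^i_\UU = U^i_E$, which reduces to $\fo_E\cap\fP^i = \fp_E^i$ under the embedding $\fo_E\hookrightarrow\UU$ and follows from $\fP=\Pi\UU$ together with $v_E(\Pi)=1$. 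Granting this, parts (iii)--(iv) of Lemma \ref{index1} yield denominator $(q^f-1)(q^f)^{i-1}$, and the stated formula follows.
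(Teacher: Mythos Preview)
Your proof is correct and follows essentially the same route as the paper's: reduce $[\langle\Pi\rangle\ltimes U_\UU : E^\times U^i_\UU]$ to $[U_\UU : U_E U^i_\UU]$, rewrite this as $[U_\UU : U^i_\UU]/[U_E U^i_\UU : U^i_\UU]$, and evaluate numerator and denominator via Lemma~\ref{index1}. The paper's proof is three bare equalities with no commentary, so your argument simply supplies the justifications the paper omits---namely, why $\Pi$ may be taken in $E^\times$ (hence the first reduction), and the filtration compatibility $U_E\cap U^i_\UU = U^i_E$ (hence $[U_E U^i_\UU : U^i_\UU] = [U_E : U^i_E]$).
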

\begin{proof}
We have
\begin{align*}
\left[ \langle \Pi \rangle \ltimes U_\UU : E^\times U^i_\UU \right] &= \left[ U_\UU : U_E U^i_\UU \right] \\
&= \frac{ \left[ U_\UU : U_\UU^i \right] }{\left[ U_E U_\UU^i : U_\UU^i \right]} \\ 
&= \frac{ \left[ U_\UU : U_\UU^1 \right] \left[ U^1_\UU : U_\UU^i \right] }{\left[ U_E : U_E^1 \right] \left[ U^1_E : U_E^i \right]},
\end{align*}
and plugging in Lemma \ref{index1} (i)-(iv) gives the result.   
\end{proof}

\begin{lem} \label{scformdim}
If Haar measure on $G/Z$ is normalized so that  
\begin{align*}
    \emph{vol}( Z.K/Z )=\frac{1}{2}(q-1),  
\end{align*}
then the volumes of $J/Z$ and the formal dimensions of the supercuspidal representations compactly induced by each $\Lambda$ in the previous table are given by the additional columns: 
\[
\begin{tabular}{c|c|c|c|c|c} 
    $\UU$ & $E/F$ & $n$ &$\emph{dim}\Lambda$     & \emph{vol}$(J/Z)$ & \emph{d}$_\pi$
    \\ \hline
    $\MM$ &  $\emph{unramified}$ & $\emph{even}$ & $q$ & $\frac{1}{2}q^{-(2i-1)}$ &  $2q^{2i}$\\
    $\MM$ &  $\emph{unramified}$ & $\emph{odd}$ & $1$ & $\frac{1}{2}q^{-(2i-1)}$ & $2q^{2i-1}$\\
    $\JJ $ &  $\emph{ramified}$ & $\emph{odd}$ & $1$ & $\frac{1}{q+1}q^{-(i-1)}$ & $(q+1)q^{i-1}$
\end{tabular}
\]
where $i = \left\lfloor \frac{n+1}{2} \right\rfloor$.
\end{lem}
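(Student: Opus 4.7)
The plan is to apply Proposition \ref{cpctdim}, which gives $\text{d}_\pi = \dim\Lambda / \text{vol}(Z.J/Z)$, and Lemma \ref{index2}, which supplies the index $[N_G(U_\UU) : J]$ for $J = E^\times U_\UU^i$. Since $Z \subset J \subset N_G(U_\UU) = \langle \Pi \rangle \ltimes U_\UU$, one has
\[
\text{vol}(J/Z) = \frac{\text{vol}(N_G(U_\UU)/Z)}{[N_G(U_\UU) : J]},
\]
so the problem reduces to computing $\text{vol}(N_G(U_\UU)/Z)$ under the normalization $\text{vol}(Z.K/Z) = \tfrac{1}{2}(q-1)$ and then simplifying the resulting ratio.

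First I would compute $\text{vol}(N_G(U_\UU)/Z)$ case by case. When $\UU = \MM$ (unramified $E/F$), $\Pi = \varpi_F I$ lies in $Z$, so $N_G(U_\MM) = Z \cdot U_\MM = Z.K$ and the volume is $\tfrac{1}{2}(q-1)$ directly from the hypothesis. When $\UU = \JJ$ (ramified $E/F$), the proof of Lemma \ref{Stdim} has already shown that the same normalization forces $\text{vol}(Z.I/Z) = \tfrac{q-1}{2(q+1)}$; since $\Pi \notin Z$ but $\Pi^2 = \varpi_F I \in Z$, the subgroup $Z.I/Z$ has index $2$ in $N_G(U_\JJ)/Z$, giving $\text{vol}(N_G(U_\JJ)/Z) = \tfrac{q-1}{q+1}$.

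Next I would simplify the right-hand side of Lemma \ref{index2} row by row, using $|GL(2,\F_q)| = (q^2-1)(q^2-q)$ for the two unramified rows (where $(e,f) = (1,2)$) and $|GL(1,\F_q)| = q-1$ for the ramified row (where $(e,f) = (2,1)$). Dividing the volumes from the previous paragraph by these indices yields $\text{vol}(J/Z) = \tfrac{1}{2}\,q^{-(2i-1)}$ in the two unramified rows and $\text{vol}(J/Z) = \tfrac{1}{q+1}\,q^{-(i-1)}$ in the ramified row. Substituting these together with the given values of $\dim\Lambda$ into Proposition \ref{cpctdim} produces the column of formal dimensions $\text{d}_\pi$.

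The only point demanding any real care is the computation of $\text{vol}(N_G(U_\UU)/Z)$, which genuinely differs between the two cases because $\Pi$ is central exactly when $\UU = \MM$; once that input is fixed, the argument is routine bookkeeping with the indices of Lemma \ref{index1}.
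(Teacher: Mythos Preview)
Your proposal is correct and follows essentially the same route as the paper: compute $\text{vol}(N_G(U_\UU)/Z)$ in the two cases (using that $\Pi$ is central when $\UU=\MM$ and has order $2$ modulo $Z$ when $\UU=\JJ$, together with $[K:I]=q+1$), divide by the index from Lemma~\ref{index2}, and then apply Proposition~\ref{cpctdim}. The only cosmetic difference is that the paper cites Lemma~\ref{index1}(v) and the tower~(\ref{tower}) directly for the normalizer volumes, whereas you phrase the same facts in terms of whether $\Pi$ lies in $Z$.
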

\begin{proof}
First, 
$$ \text{vol}(J/Z) = \frac{\text{vol}(\langle \Pi \rangle \ltimes U_\UU / Z)}{\left[ \langle \Pi \rangle \ltimes U_\UU / Z: E^\times U^i_\UU / Z\right]}.$$  

\textbf{Case 1}:  $E/F$ is unramified, so $\UU=\MM$, $e=1$, and $f=2$.  From Lemma \ref{index1} (v), 
$$\text{vol}(\langle \Pi \rangle \ltimes U_\UU / Z) = \text{vol}(Z.K/Z) = \frac{1}{2}(q-1),$$ 
and from Lemma \ref{index2}, 
$$ \left[ \langle \Pi \rangle \ltimes U_\UU / Z: E^\times U^i_\UU / Z\right] = 
(q^2-q)q^{2(i-1)},$$
so 
$$\text{vol}(J/Z) = \frac{1}{2}q^{-(2i-2)-1}.$$

\textbf{Case 2}:  $E/F$ is ramified, so $\UU=\JJ$, $e=2$, and $f=1$.  From Lemma \ref{index1} (v) and the tower of surjections (\ref{tower}),
$$\text{vol}(\langle \Pi \rangle \ltimes U_\UU / Z) = 2 \text{vol}(Z.I/Z) = 2 \frac{1}{q+1} \text{vol}(Z.K/Z) = \frac{q-1}{q+1},$$ 
and from Lemma \ref{index2}, 
$$ \left[ \langle \Pi \rangle \ltimes U_\UU / Z: E^\times U^i_\UU / Z\right] = 
(q-1)q^{i-1},$$
so
$$\text{vol}(J/Z) =  \frac{1}{q+1}q^{-(i-1)}.$$

Dividing dim$\Lambda$ by these volumes according to Proposition \ref{cpctdim} completes the table.
\end{proof}

\begin{rem}
These formal dimensions agree with the formal dimensions in Theorem 2.2.8 of \cite{cms} for $n=2$.
\end{rem}

\subsection{Proof of Example \ref{padicfree} }

Using the formula in Corollary \ref{padicvndim}, we may multiply the covolumes in Lemma \ref{covol} by the formal dimensions in Lemma \ref{Stdim}, Lemma \ref{dimdzsc}, Lemma \ref{scformdim} to obtain the list of von Neumann dimensions in Example \ref{padicfree}.

\section{Concluding remarks}

\subsection{Related directions}  

In \cite{ruthdiss}, formulas for dimensions of spaces of cusp forms were combined with Theorem \ref{ghjvndim} to obtain representations of the II$_1$ factor $R\Gamma$ on subspaces of $L^2(\Lambda \backslash G)$, where $G=PSL(2,\R)$, and $\Gamma$ and $\Lambda$ are lattices in $G$. In \cite{ruthlmp}, we will compile a list of situations where the pointwise limit multiplicity has been proven to hold, thereby obtaining representations of a II$_1$ factor $R\Gamma$ on subspaces of $L^2(\Lambda \backslash G)$ for many other groups $G$, including $G=PSL(2,\R) \times PSL(2,\R)$ and $G=PGL(3,F)$.  

\subsection{Further questions}

\begin{que}
Ihara's Theorem \ref{ihathm} only concerns torsion-free lattices in $PGL(2,F)$.  What about lattices in $PGL(n,F)$, $n \geq 3$?  If we knew the covolumes of these lattices, we could extend Example \ref{padicfree} using the rest of the formal dimensions in \cite{cms}.  (We are not asking about lattice covolume in $PSL(n,\R)$, $n \geq 3$, because $SL(n,\R)$ has discrete series representations only for $n=2$.)
\end{que}

\begin{que}
$PSL(2,\R)$ contains lattices that, unlike free groups, do not have trivial second cohomology group, \textit{e.g}.\ lattices isomorphic to $\pi_1(X)$, where $X$ is a compact Riemann surface.  Could it be that the restriction to $\pi_1(X)$ of a 2-cocycle associated to a projective representation of $PSL(2,\R)$ \textit{arising from a representation of $SL(2,\R)$} is trivial anyway?  What is the relationship between central extensions of an algebraic group on the one hand, and central extensions of its lattices on the other?
\end{que}

\begin{que}
Is it possible to construct representations of II$_1$ factors using principal series representations, rather than  discrete series representations?  The proof of Theorem \ref{ghjvndim} relies crucially on the fact that discrete series representations occur discretely in the right regular representation, so the method does not carry over to principal series representations, which occur continuously.  
\end{que}

\begin{que}
The index of a subfactor is defined as the ratio of two von Neumann dimensions: Data about the ``size'' of the representation space cancels out, leaving data about the subfactor inclusion.  
Vaughan Jones showed in \cite{jon} that the index of a subfactor in a II$_1$ factor can only assume values in the set 
$$ \lbrace 4 \cos ^2 \left( \pi / q \right) \, \mid \, q=3,4,5,\ldots \rbrace \cup [4,\infty],$$ and he gave an example of a factor in which all these subfactor indices are achieved.  For a given lattice $\Gamma$ in $PSL(n,\R)$ or $PGL(n,F)$, what subset of these index values is achieved in $R\Gamma$?  Does the answer depend upon whether $n=2$ or $n>2$?
\end{que}

\section*{Acknowledgments} 

We thank Alain Valette, A. Raghuram, Moshe Adrian, Allen Moy, and Vaughan Jones for enlightening conversations and correspondence.  We are especially grateful to Vaughan Jones for Remark \ref{projrep}.  This work would not have been possible without the workshop and conference on representation theory of $\fp$-adic groups at IISER Pune during July 2017 and the Hausdorff Trimester Program on von Neumann algebras in Bonn during Summer 2016; we thank the organizers for the opportunity to have attended these events. 

\bibliographystyle{amsalpha}
\bibliography{ref}

\providecommand{\bysame}{\leavevmode\hbox to3em{\hrulefill}\thinspace}
\providecommand{\MR}{\relax\ifhmode\unskip\space\fi MR }
\providecommand{\MRhref}[2]{%
  \href{http://www.ams.org/mathscinet-getitem?mr=#1}{#2}
}
\providecommand{\href}[2]{#2}
\begin{thebibliography}{GdlHJ89}

\bibitem[AS77]{atishm}
Michael Atiyah and Wilfried Schmid, \emph{A geometric construction of the
  discrete series for semisimple lie groups}, Inventiones mathematicae
  \textbf{42} (1977), 1--62.

\bibitem[Ati76]{ati}
M.~F. Atiyah, \emph{Elliptic operators, discrete groups and von {N}eumann
  algebras}, 43--72. Ast\'erisque, No. 32--33. \MR{0420729}

\bibitem[BH06]{buhe}
Colin~J. Bushnell and Guy Henniart, \emph{The local {L}anglands conjecture for
  {$\rm GL(2)$}}, Grundlehren der Mathematischen Wissenschaften [Fundamental
  Principles of Mathematical Sciences], vol. 335, Springer-Verlag, Berlin,
  2006. \MR{2234120}

\bibitem[Bum97]{bum}
Daniel Bump, \emph{Automorphic forms and representations}, Cambridge Studies in
  Advanced Mathematics, vol.~55, Cambridge University Press, Cambridge, 1997.
  \MR{1431508}

\bibitem[Car79]{cartier}
P.~Cartier, \emph{Representations of {$p$}-adic groups: a survey}, Automorphic
  forms, representations and {$L$}-functions ({P}roc. {S}ympos. {P}ure {M}ath.,
  {O}regon {S}tate {U}niv., {C}orvallis, {O}re., 1977), {P}art 1, Proc. Sympos.
  Pure Math., XXXIII, Amer. Math. Soc., Providence, R.I., 1979, pp.~111--155.
  \MR{546593}

\bibitem[CMS90]{cms}
Lawrence Corwin, Allen Moy, and Paul~J. Sally, Jr., \emph{Degrees and formal
  degrees for division algebras and {${\rm GL}_n$} over a {$p$}-adic field},
  Pacific J. Math. \textbf{141} (1990), no.~1, 21--45. \MR{1028263}

\bibitem[GdlHJ89]{ghj}
Frederick~M. Goodman, Pierre de~la Harpe, and Vaughan F.~R. Jones,
  \emph{Coxeter graphs and towers of algebras}, Mathematical Sciences Research
  Institute Publications, vol.~14, Springer-Verlag, New York, 1989. \MR{999799}

\bibitem[Iha66]{iha}
Yasutaka Ihara, \emph{On discrete subgroups of the two by two projective linear
  group over $\mathfrak{p}$-adic fields}, J. Math. Soc. Japan \textbf{18}
  (1966), 219--235. \MR{0223463}

\bibitem[Iwa97]{iwan}
Henryk Iwaniec, \emph{Topics in classical automorphic forms}, Graduate Studies
  in Mathematics, vol.~17, American Mathematical Society, Providence, RI, 1997.
  \MR{1474964}

\bibitem[Jon83]{jon}
V.~F.~R. Jones, \emph{Index for subfactors}, Invent. Math. \textbf{72} (1983),
  no.~1, 1--25. \MR{696688}

\bibitem[KR14]{knra}
Andrew Knightly and Carl Ragsdale, \emph{Matrix coefficients of depth-zero
  supercuspidal representations of {GL}(2)}, Involve \textbf{7} (2014), no.~5,
  669--690. \MR{3245843}

\bibitem[Mau64]{mau}
F.~I. Mautner, \emph{Spherical functions over {${\mathfrak P}$}-adic fields.
  {II}}, Amer. J. Math. \textbf{86} (1964), 171--200. \MR{0166305}

\bibitem[MVN36]{roo1}
F.~J. Murray and J.~Von~Neumann, \emph{On rings of operators}, Ann. of Math.
  (2) \textbf{37} (1936), no.~1, 116--229. \MR{1503275}

\bibitem[PS83]{ps}
Ilya Piatetski-Shapiro, \emph{Complex representations of {${\rm GL}(2,\,K)$}\
  for finite fields {$K$}}, Contemporary Mathematics, vol.~16, American
  Mathematical Society, Providence, R.I., 1983. \MR{696772}

\bibitem[Rob83]{rob}
Alain Robert, \emph{Introduction to the representation theory of compact and
  locally compact groups}, London Mathematical Society Lecture Note Series,
  vol.~80, Cambridge University Press, Cambridge-New York, 1983. \MR{690955}

\bibitem[Rut]{ruthlmp}
Lauren~C. Ruth, \emph{Representing some {II$_1$} factors in {$L^2(\Gamma
  \backslash G)$}}, in preparation.

\bibitem[Rut18]{ruthdiss}
\bysame, \emph{Two new settings for examples of von {N}eumann dimension},
  \url{arxiv:1811.11749}, 2018, dissertation.

\bibitem[Sak98]{sak}
Sh{\^o}ichir{\^o} Sakai, \emph{{$C^*$}-algebras and {$W^*$}-algebras}, Classics
  in Mathematics, Springer-Verlag, Berlin, 1998, Reprint of the 1971 edition.
  \MR{1490835}

\end{thebibliography}

\end{document}